\def\co{\colon\thinspace}
\newcommand{\Br}{\mathbf{\Sigma}}
\newcommand{\ab}{\operatorname{ab}}
\newcommand{\sQ}{\mathcal{Q}}
\newcommand{\bN}{\mathbb{N}}
\newcommand{\bZ}{\mathbb{Z}}
\newcommand{\bQ}{\mathbb{Q}}
\newcommand{\si}{\sigma}
\newcommand{\HFhat}{\widehat{\operatorname{HF}}}
\newcommand{\Kh}{\operatorname{Kh}}
\newcommand{\HFK}{\widehat{\operatorname{HFK}}}
\newcommand{\rk}{\operatorname{rk}}
\newcommand{\Char}{\textup{Char}}
\newcommand{\spinc}{\operatorname{Spin}^c}
\newcommand{\frks}{{\mathfrak s}}
\newcommand{\frkt}{{\mathfrak t}}
\newtheorem{theorem}{Theorem}%[section]
\newtheorem{definition}[theorem]{Definition}
\newtheorem{corollary}[theorem]{Corollary}
\newtheorem{proposition}[theorem]{Proposition}
\newcommand{\positive}
	{\raisebox{-2pt}{\includegraphics[scale=0.085]{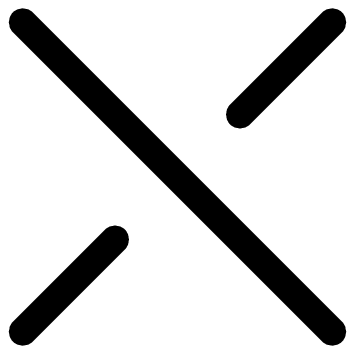}}}
\newcommand{\zero}
	{\raisebox{-2pt}
	{\includegraphics[scale=0.085]{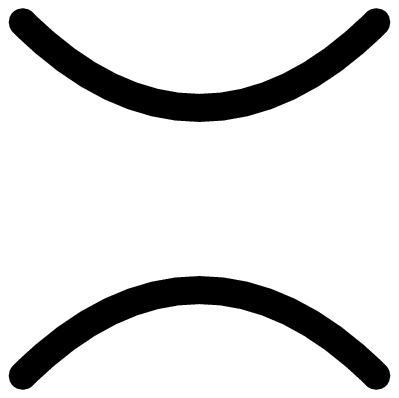}}}
\newcommand{\one}
	{\raisebox{-2pt}
	{\includegraphics[scale=0.085]{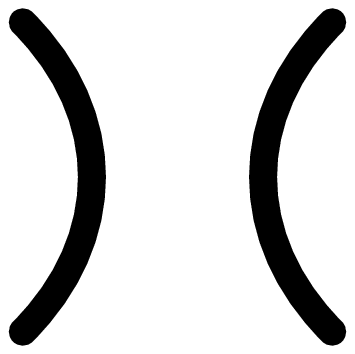}}}
\title[Turaev Torsion, definite 4-manifolds and quasi-alternating knots]{Turaev Torsion, definite 4-manifolds, and quasi-alternating knots}
\date{June 27, 2011}
\author[Greene]{Joshua Evan Greene}
\thanks{Josh Greene was partially supported by an NSF postdoctoral fellowship}
\address{Department of Mathematics, Columbia University, 2990 Broadway, New York, NY 10027.}
\email{josh@math.columbia.edu }
\author[Watson]{Liam Watson}
\thanks{Liam Watson was partially supported by an NSERC postdoctoral fellowship}
\address{Department of Mathematics, UCLA, 520 Portola Plaza, Los Angeles, CA 90095.}
\email{lwatson@math.ucla.edu}
\begin{document}

\begin{abstract}
We construct an infinite family of hyperbolic, homologically thin knots that are not quasi-alternating. To establish the latter, we argue that the branched double-cover of each knot in the family does not  bound a negative definite 4-manifold with trivial first homology and bounded second betti number. This fact depends in turn on information from the correction terms in Heegaard Floer homology, which we establish by way of a relationship to, and calculation of, the Turaev torsion. 
\end{abstract}

\maketitle

\section{Introduction.}\label{s: intro}

Quasi-alternating (QA) links provide a natural extension of the class of alternating links.  They first arose in the context of Heegaard Floer homology of branched double-covers \cite{OSz2005-branch}.

\begin{definition}\label{def:qa}The set of QA links $\sQ$ is the smallest set of links containing the trivial knot that is closed under the following relation: if $L$ admits a projection with distinguished crossing $L(\positive)$ so that $\det(L(\positive))=\det(L(\zero))+\det(L(\one))$ where $L(\zero),L(\one)\in\sQ$, then $L=L(\positive)\in\sQ$ as well. \end{definition}

QA links are {\em thin} by a result of Manolescu and Ozsv\'ath \cite{MO2008}: each of their reduced ordinary Khovanov \cite{Khovanov2000}, odd-Khovanov \cite{ORS2007}, and $\bZ/2\bZ$ knot Floer \cite{OSz2004-knot,Rasmussen2003} homology groups is torsion-free and supported on a single diagonal with respect to the theory's bigrading.  However, it was shown in \cite{Greene2010} that the converse does not hold: the thin knot $11^n_{50}$ is non-QA.

The purpose of the present paper is to exhibit further examples of this kind in the following strong sense.

\begin{theorem}\label{thm:main}
There exists an infinite family of thin, hyperbolic, non-QA knots with identical homological invariants.
\end{theorem}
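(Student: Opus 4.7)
The plan is to produce the family $\{K_n\}_{n \geq 1}$ by a local modification of a suitable seed knot inside a Conway tangle, designed so that the reduced Khovanov, odd-Khovanov, and $\HFK$ chain complexes are independent of $n$ up to quasi-isomorphism, while the isotopy types of the $K_n$ and the homeomorphism types of the branched double covers $\Sigma(K_n)$ remain pairwise distinct rational homology spheres with common $|H_1|$. Thinness and the coincidence of homological invariants are then automatic by construction. Hyperbolicity follows from a Thurston hyperbolic-surgery argument applied to a hyperbolic seed tangle, and can be certified computationally with SnapPy. The crux of the theorem is thus the simultaneous non-QA conclusion for every member of the family.

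The reduction to a 4-manifold obstruction relies on a result of \ons\ \cite{OSz2005-branch}, refined in \cite{MO2008}: if $L \in \sQ$, then $\Sigma(L)$ is an $L$-space bounding a sharp negative definite 4-manifold $W$ with $H_1(W;\bZ)=0$ and $b_2(W)$ bounded in terms of $\det(L)$. Consequently, to prove $K_n \notin \sQ$ it suffices to rule out any such filling of $\Sigma(K_n)$. The obstruction is extracted from the correction terms $d(\Sigma(K_n),\frkt)$: sharpness forces the tuple $\{d(\Sigma(K_n),\frkt)\}_\frkt$ to match $\{\max_c (c^2 + b_2(W))/4\}$ taken over characteristic vectors $c$ of the intersection form of $W$, which is in turn constrained by the possibility of embedding that form into a negative diagonal lattice $-\bZ^N$ (via the Donaldson--Elkies diagonalization applied after closing up).

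To execute this, I would compute the $d$-invariants of $\Sigma(K_n)$ via the relationship between correction terms and Turaev torsion established in the body of the paper. Turaev torsion is combinatorially accessible from any surgery presentation of $\Sigma(K_n)$, and the Conway-tangle model provides such a presentation in a form depending uniformly on $n$. The goal is then to identify a ``universal'' pair of spin$^c$ structures whose $d$-invariant difference forces a configuration of characteristic vectors that no admissible negative definite form can realize, thereby reducing the infinite family of obstructions to a single algebraic inequality verified once.

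The principal obstacle I anticipate is establishing this uniform lattice-embedding obstruction as $n$ varies, since the bound on $b_2(W)$ grows with $\det(K_n)$ and so the space of candidate fillings expands. My plan is to isolate a single short-vector obstruction in the spirit of \cite{Greene2010}, whose persistence reduces the problem to a statement about the leading-order behavior of the Turaev torsion of $\Sigma(K_n)$ in $n$. If the torsion admits a closed-form expression coming from the Conway-tangle structure, this leading-order analysis should pin down the obstruction uniformly, yielding non-QA for all $n$ at once.
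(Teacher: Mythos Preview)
Your outline has the right architecture---construct a family with fixed determinant, access the $d$-invariants of the branched double covers through Turaev torsion, and obstruct the negative definite filling that QA guarantees---but the endgame you sketch is harder than necessary and contains an internal inconsistency that points to the missing idea. You stipulate that the $\Br(K_n)$ share a common $|H_1|$, i.e.\ the $K_n$ have a fixed determinant $D$, yet you then worry that ``the bound on $b_2(W)$ grows with $\det(K_n)$'' and plan a uniform short-vector lattice-embedding obstruction in the style of \cite{Greene2010}. With $D$ fixed, the Ozsv\'ath--Szab\'o filling satisfies $b_2(W) < D$ and has discriminant $D$, so by the Eisenstein--Hermite finiteness theorem there are only finitely many candidate intersection lattices \emph{altogether}. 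The inequality $c_1(\frks)^2 + b_2(W) \le 4\,d(\Br(L),\frkt)$ then already yields an \emph{absolute} lower bound $C(D)$ on every correction term of $\Br(L)$ for any QA link $L$ of determinant $D$. No sharpness of $W$ (which is not asserted in \cite{OSz2005-branch} anyway), no Donaldson diagonalization, and no specially chosen pair of spin$^c$ structures is required.

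Once this bound $C(D)$ is in hand, the paper's argument is short: it suffices to show $\min_{\frkt} d(\Br(K_n),\frkt) \to -\infty$. Since the Casson--Walker invariant is constant across the family (same Jones polynomial by the Kanenobu-type construction, vanishing signature because the knots are ribbon), Rustamov's formula reduces this to $\min_{\frkt} \tau(\Br(K_n),\frkt) \to -\infty$. An explicit Fox-calculus computation from the white-graph Heegaard diagram shows that the relevant minor, and hence the torsion vector in $\bQ[H]$, is \emph{linear and non-constant} in $n$; the normalization $\sum_{\frkt} \tau(\Br(K_n),\frkt) = 0$ then forces the minimum coefficient to $-\infty$. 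So the ``leading-order analysis'' you anticipate is immediate once the torsion is written down, and the delicate uniform lattice obstruction you propose is a detour around a one-line finiteness argument.
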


The examples come from a construction of Kanenobu  \cite{Kanenobu1986} depicted in Figure \ref{fig:Kanenobu} (see also \cite{Watson2006,Watson2007}).  The main effort is to show that these knots are non-QA, which we establish by studying the Turaev torsion of the branched double-cover.

We now provide a brief overview of the argument.  For a QA link $L$, Ozsv\'ath and Szab\'o observed that $\Br(L)$, the branched double-cover of $L$, bounds a particular type of 4-manifold $W$ \cite{OSz2005-branch}.  Specifically, $H_1(W;\bZ)=0$, the intersection pairing $Q_W$ on $W$ is negative-definite, and both its rank and discriminant are bounded by the link determinant $\det(L)$.  By a basic result of Eisenstein and Hermite, it follows that the lattice $\Lambda := (H_2(W),Q_W)$ belongs to one of finitely many isomorphism types \cite{MilnorHusemoller1973}.  Associated with a lattice is a certain numerical invariant $m(\Lambda) \in \bQ$, so the finiteness result implies an absolute lower bound on $m(\Lambda)$ in terms of $\det(L)$.  Now, Ozsv\'ath and Szab\'o defined a collection of numerical invariants for a 3-manifold called its {\em correction terms}, and showed in the setting at hand that $m(\Lambda)$ provides a lower bound on the correction terms of $\Br(L)$ \cite{OSz2003-grading}.  In summary, there exists an absolute lower bound, in terms of $\det(L)$, on the correction terms of $\Br(L)$ for a QA link $L$.

Thus, our strategy is to study an infinite family of thin, hyperbolic knots $K_n$ with identical homological invariants, and argue that the smallest correction term of $\Br(K_n)$ tends to $-\infty$ with $n$.  Since these knots have the same determinant, it follows that taking all $K_n$ with $n$ sufficiently large provides the desired family to establish Theorem \ref{thm:main}.

To obtain the result about the correction terms, it suffices, by results of Mullins \cite{Mullins1993} and Rustamov \cite{Rustamov1}, to show that the smallest coefficient in the Turaev torsion $\tau(\Br(K_n))$ tends to $-\infty$ with $n$.  In order to show this, we present the space $\Br(K_n)$ by a relatively simple Heegaard diagram and establish the behaviour of the torsion invariant directly from its definition. 

The remainder of the paper is organized as follows. In Section \ref{s: setup} we formalize the obstruction sketched above, recalling in particular the necessary background about correction terms.  In Section \ref{s: kanenobu} we define the family of knots $K_n$ and collect their basic properties: we establish that these knots are thin, hyperbolic, and possess identical homological invariants.  Finally, in Section \ref{s: turaev} we study the topology of the spaces $\Br(K_n)$ and calculate their Turaev torsion, completing the proof of Theorem \ref{thm:main}.

\subsection*{Acknowledgement.}  Thanks to Matt Hedden for helpful conversations, and especially his input to Theorem \ref{thm:hfk}.

\section{An obstruction.}\label{s: setup}

Our obstruction to QA-ness reads as follows.

\begin{proposition}\label{prp:qa}
For all $D \in \bN$, there exists a constant $C = C(D) \in \bZ$ such that if $L$ is a QA link with $\det(L) = D$, then
\[ C \leq d(\Br(L),\frkt) \quad \forall \, \frkt \in \spinc(\Br(L)).\]
\end{proposition}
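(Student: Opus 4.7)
My plan is to execute the three-step argument sketched in the introduction. First I will construct, by induction on the QA complexity of $L$, a smooth compact 4-manifold $W = W_L$ filling $\Br(L)$ that enjoys the following four properties: $H_1(W;\bZ) = 0$, the intersection form $Q_W$ is negative definite, $b_2(W) \leq \det(L) - 1$, and $|\det Q_W| = \det(L)$. The base case is the unknot, where $W = B^4$. For the inductive step, the three branched double-covers $\Br(L(\positive))$, $\Br(L(\zero))$, $\Br(L(\one))$ arise from integer surgeries on a common knot in a common 3-manifold; attaching a 2-handle to $W_{L(\zero)}$ along this knot with the framing prescribed by the surgery triangle yields $W_{L(\positive)}$. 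A Mayer--Vietoris calculation will show that the four properties propagate, with the determinant skein relation $\det(L(\positive)) = \det(L(\zero)) + \det(L(\one))$ matching exactly the change in discriminant forced by the 2-handle attachment.

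Next, by the Eisenstein--Hermite theorem \cite{MilnorHusemoller1973}, integer lattices with rank bounded by $D - 1$ and absolute discriminant equal to $D$ fall into finitely many isomorphism classes. Consequently, as $L$ ranges over QA links with $\det(L) = D$, the intersection lattice $\Lambda_L := (H_2(W_L), Q_{W_L})$ takes only finitely many isomorphism types. To each negative-definite lattice $\Lambda$ one associates a rational invariant $m(\Lambda)$, defined by suitably optimizing the normalized squared-norm $(\xi^2 + \rk(\Lambda))/4$ over equivalence classes of characteristic vectors of $\Lambda$, and depending only on the isomorphism class. The Ozsv\'ath--Szab\'o inequality \cite{OSz2003-grading}---valid for any negative-definite filling $W$ of a rational homology sphere $Y$ with $H_1(W) = 0$---then yields $d(\Br(L), \frkt) \geq m(\Lambda_L)$ for every $\frkt \in \spinc(\Br(L))$. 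Taking $C(D) := \min_{\Lambda} m(\Lambda)$ over the finite set of lattices produced by Step 1 will complete the argument.

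The principal technical obstacle is the bookkeeping in Step 1: simultaneously preserving negative-definiteness, vanishing of $H_1$, the rank bound, and the discriminant identity through each inductive 2-handle attachment. The combinatorial determinant relation defining QA links must be reconciled with the linear-algebraic change in the discriminant of the intersection form, and the framing must be chosen so as to maintain negative-definiteness. Once these are in place, finiteness of lattices and the Ozsv\'ath--Szab\'o bound on correction terms supply the remainder of the argument essentially as citations.
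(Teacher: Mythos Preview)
Your plan is correct and mirrors the paper's proof exactly: build the negative-definite filling $W$ by induction on the QA structure (the paper isolates this as Theorem~\ref{thm:os1}, with the same inductive sketch you outline), then invoke Eisenstein--Hermite finiteness of lattices and the Ozsv\'ath--Szab\'o correction-term inequality to extract the uniform bound $C(D)$. One small correction to your inductive step: the 2-handle must be attached to $W_{L_i}$ for whichever $i\in\{0,1\}$ makes the cobordism from $\Br(L_i)$ to $\Br(L)$ negative definite---this is a choice of \emph{resolution}, not of framing, since the framing is already determined by the condition $\partial W=\Br(L)$.
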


Here $d(Y,\frkt) \in \bQ$ denotes the {\em correction term} or {\em d-invariant} for a 3-manifold $Y$ equipped with a torsion spin$^c$ structure $\frkt$.  It was defined by Ozsv\'ath and Szab\'o in Heegaard Floer homology by analogy to the Fr\o yshov $h$-invariant in Seiberg-Witten theory \cite{OSz2003-grading}.

The proof of Proposition \ref{prp:qa} rests on three facts.  The first of these is implicit in the work of Ozsv\'ath and Szab\'o \cite{OSz2005-branch}.

\begin{theorem}[Ozsv\'ath-Szab\'o {\cite[Proposition 3.3 and Proof of Lemma 3.6]{OSz2005-branch}}]\label{thm:os1}

If $L$ is QA, then $\Br(L)$ is an L-space that bounds a negative definite $4$-manifold $W$ with $H_1(W)=0$ and $b_2(W)<\det(L)$.

\end{theorem}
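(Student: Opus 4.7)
The plan is to argue by induction on the ``QA complexity'' of $L$, defined as the minimum number of steps in Definition \ref{def:qa} needed to build $L$ from the unknot. The base case is immediate: when $L$ is the unknot, $\Br(L) = S^3$ is an L-space, and it bounds $W = B^4$, which is negative-definite with $H_1 = 0$ and $b_2 = 0 < 1 = \det(\text{unknot})$.

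For the inductive step, suppose $L = L(\positive)$ with $L(\zero), L(\one) \in \sQ$ of smaller complexity and $\det(L) = \det(L(\zero)) + \det(L(\one))$. By induction, for $i \in \{\zero, \one\}$, $\Br(L(i))$ is an L-space bounding a negative definite $W_i$ with $H_1(W_i) = 0$ and $b_2(W_i) < \det(L(i))$. The three local resolutions at the distinguished crossing are pairwise related by saddle cobordisms in $S^3 \times I$, and taking the double cover branched over such a saddle yields a $2$-handle cobordism between the corresponding branched double-covers. In particular, there is a $2$-handle cobordism $X \co \Br(L(\zero)) \to \Br(L(\positive))$. Setting $W := W_0 \cup_{\Br(L(\zero))} X$, I observe that $W$ is built from $B^4$ by attaching only $2$-handles, so $H_1(W) = 0$, and $b_2(W) = b_2(W_0) + 1 \leq \det(L(\zero)) < \det(L(\positive))$, using $\det(L(\one)) \geq 1$.

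Negative definiteness of $W$ amounts to checking that the intersection form remains negative definite when extended by the new $2$-handle. By the Schur complement criterion this reduces to an inequality $n < v^{T} Q_{W_0}^{-1} v$, where $n$ is the framing of the new handle and $v$ records its intersection numbers with a basis of $H_2(W_0)$. One verifies this by a local model of the saddle surface in $S^3 \times I$ and its lift to the branched double cover: the $\positive$-resolution specified by the QA structure produces a sufficiently negative framing upstairs. This geometric step is the chief technical ingredient, and the one I expect to be the main obstacle.

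Finally, the L-space property for $\Br(L(\positive))$ follows from the unoriented skein exact triangle in Heegaard Floer homology,
\[ \cdots \to \HFhat(\Br(L(\positive))) \to \HFhat(\Br(L(\zero))) \to \HFhat(\Br(L(\one))) \to \cdots, \]
combined with the standard lower bound $\rk \HFhat(Y) \geq |H_1(Y; \bZ)|$ for rational homology spheres. The right two terms have total rank $\det(L(\zero)) + \det(L(\one)) = \det(L(\positive))$, so exactness forces $\rk \HFhat(\Br(L(\positive))) \leq \det(L(\positive))$; together with the lower bound this is an equality, making $\Br(L(\positive))$ an L-space and closing the induction.
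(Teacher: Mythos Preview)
Your overall strategy matches the paper's: induct on a measure of QA complexity, take $W=B^4$ in the base case, and in the inductive step glue a single $2$-handle cobordism $X$ coming from the skein triangle onto the inductively supplied $4$-manifold. The L-space argument via the skein triangle and the rank inequality is exactly the paper's, and your bookkeeping for $H_1(W)=0$ and $b_2(W)<\det(L)$ is correct.

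There is one genuine gap. You always attach $X$ to $W_0$, i.e.\ you commit to the cobordism $\Br(L(\zero))\to\Br(L(\positive))$, and then propose to verify negative definiteness of $W$ by a Schur-complement/local-model computation. But that particular cobordism is not always the negative definite one: in the surgery triangle relating the three branched double-covers, the cyclic direction is fixed by the crossing, and the $2$-handle cobordism landing in $\Br(L)$ has source $\Br(L_i)$ for one specific $i\in\{0,1\}$, not always $i=0$. The paper makes exactly this point, writing ``a negative definite $2$-handle cobordism $X$ from $\Br(L_i)$ to $\Br(L)$ \dots where $\{i,j\}=\{0,1\}$''; the existence of such an $X$ (for the correct $i$) is the content of \cite[Lemma~3.6]{OSz2005-branch}, and this is where the determinant additivity is used. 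Once you allow that choice, your Schur-complement detour is unnecessary: since $\Br(L_i)$ is a rational homology sphere, Mayer--Vietoris gives $H_2(W;\bQ)\cong H_2(W_i;\bQ)\oplus H_2(X;\bQ)$ with block-diagonal intersection form, so negative definiteness of $W_i$ and of $X$ separately already yield negative definiteness of $W=W_i\cup X$.
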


\noindent Recall that an {\em L-space} is a rational homology sphere $Y$ with the property that $|H_1(Y;\bZ)|= \rk\HFhat(Y)$. Here and throughout we work with $\bZ / 2 \bZ$ coefficients for the Heegaard Floer homology group $\HFhat$.

\begin{proof}[Sketch of the Proof of Theorem \ref{thm:os1}.]
We proceed by induction on $\det(L)$.  When $\det(L) = 1$, $L$ is the unknot, $\Br(L) = S^3$ is an L-space, and we take $W = B^4$.  Now, given a QA link with $\det(L) > 1$, choose a QA crossing and let $L_0, L_1 \in \sQ$ denote its two resolutions.  By induction, both $\Br(L_0)$ and $\Br(L_1)$ are L-spaces, so the same follows for $\Br(L)$ from the skein sequence in $\HFhat$ relating these three spaces. Furthermore, from the skein sequence we obtain a negative definite 2-handle cobordism $X$ from $\Br(L_i)$ to $\Br(L)$ and another from $\Br(L)$ to $\Br(L_j)$, where $\{i,j\} = \{0,1\}$.  By induction, $\Br(L_i)$ bounds a negative-definite 4-manifold $W_i$ with $H_1(W_i) = 0$ and $b_2(W_i) < \det(L_i)$.  It follows that $W = W_i \cup X$ satisfies the conclusions of the Theorem for $L$, which completes the induction step. 
\end{proof}

The second fact is a classical result that implies that the intersection pairing on $W$ in Theorem \ref{thm:os1} belongs to one of finitely many isomorphism types.

\begin{theorem}[Eisenstein-Hermite {\cite[Lemma 1.6]{MilnorHusemoller1973}}]\label{thm:lattice}

There exist finitely many isomorphism types of definite, integral lattices with bounded rank and discriminant.  
\qed

\end{theorem}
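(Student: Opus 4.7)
The plan is to reduce the finiteness of isomorphism types to the obvious finiteness of $n \times n$ integer matrices with bounded entries. Replacing $Q$ by $-Q$ if necessary, we restrict to the positive definite case. Fixing bounds on the rank $n$ and the discriminant $d$, it will suffice to show that every such lattice $\Lambda$ admits a basis in which the Gram matrix has all entries bounded in absolute value by a constant $N(n,d)$, since there are only finitely many such integer matrices.

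The key input is Hermite's classical short-vector inequality: every positive definite integral lattice of rank $n$ and discriminant $d$ contains a nonzero vector $v$ with $Q(v,v) \leq \gamma_n d^{1/n}$, where $\gamma_n$ is Hermite's constant depending only on $n$. One derives this by applying Minkowski's theorem on convex bodies to a suitably scaled ball in $\Lambda \otimes \bR$. We then proceed by induction on $n$. Take $e_1 \in \Lambda$ to be a shortest nonzero vector, let $\pi \co \Lambda \otimes \bQ \to \langle e_1 \rangle^\perp$ be the orthogonal projection, and observe that (after rescaling by $Q(e_1, e_1)$ to clear denominators) $\pi(\Lambda)$ is a positive definite integral lattice of rank $n-1$ whose discriminant is bounded by a function of $n$ and $d$. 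By induction it admits a Gram-bounded basis $\bar{e}_2, \ldots, \bar{e}_n$, which we lift to vectors $e_2, \ldots, e_n \in \Lambda$.

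The main obstacle is that each lift $e_i$ is determined only up to an integer multiple of $e_1$, so one must control the resulting inner products $Q(e_1, e_i)$. This is handled by a Gram--Schmidt-style reduction: replace each $e_i$ by $e_i - k_i e_1$ for the unique $k_i \in \bZ$ that minimizes $|Q(e_1, e_i - k_i e_1)|$, so that $|Q(e_1, e_i)| \leq \tfrac{1}{2} Q(e_1, e_1)$. The remaining inner products $Q(e_i, e_j)$ with $i, j \geq 2$ are then bounded via Cauchy--Schwarz, $|Q(e_i, e_j)|^2 \leq Q(e_i, e_i) Q(e_j, e_j)$, using the inductively controlled bounds on the diagonal entries together with Hermite's bound on $Q(e_1, e_1)$. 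This produces a basis of $\Lambda$ whose Gram matrix has all entries bounded in absolute value by a constant depending only on $n$ and $d$, completing the argument.
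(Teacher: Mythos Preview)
The paper does not supply its own proof of this theorem: it is quoted as a classical result of Eisenstein--Hermite with a bare \texttt{\textbackslash qed} and a citation to \cite[Lemma 1.6]{MilnorHusemoller1973}. Your outline is correct and is in fact the standard Hermite reduction argument that appears in that reference: pick a shortest vector (hence primitive), project to its orthogonal complement, apply induction, and reduce the lifts modulo $e_1$ to bound the off-diagonal Gram entries. One small remark: you invoke Cauchy--Schwarz to bound $Q(e_i,e_j)$ for $i,j\ge 2$, but you could equally (and perhaps more directly) use the decomposition $Q(e_i,e_j)=Q(\pi(e_i),\pi(e_j))+Q(e_1,e_i)Q(e_1,e_j)/Q(e_1,e_1)$, since both summands are already controlled by the induction and the reduction step. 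Either way the argument closes, and there is nothing further in the paper to compare against.
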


The third and final fact is also due to Ozsv\'ath and Szab\'o.

\begin{theorem}[Ozsv\'ath-Szab\'o {\cite[Theorem 9.6]{OSz2003-grading}}]\label{thm:os2}
Suppose that a rational homology sphere $Y$ bounds a negative definite 4-manifold $W$ with $H_1(W) = 0$.  Then every $\frkt \in \spinc(Y)$ extends to some $\frks \in \spinc(W)$, and we have
\[ c_1(\frks)^2 + b_2(W) \leq 4 d(Y,\frkt).\rlap{\hspace{1.86in} \qed}\] 
\end{theorem}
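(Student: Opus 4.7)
The plan is to puncture $W$ and view the result as a $\spinc$ cobordism from $S^3$ to $Y$, invoke the theorem that $b_2^+=0$ forces the induced map on $HF^\infty$ to be an isomorphism, and then compare the degree-shift computation against the characterization of $d(Y,\frkt)$ in terms of the long exact sequence relating $\HFminus$, $\HFinfinity$, and $\HFplus$.

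First I would verify that every $\frkt \in \spinc(Y)$ extends to $W$. Since $H_1(W) = 0$, Poincar\'e--Lefschetz duality yields $H^3(W, Y; \bZ) \cong H_1(W; \bZ) = 0$, so the restriction $H^2(W; \bZ) \to H^2(Y; \bZ)$ is surjective. As $\spinc$ structures form a torsor over $H^2$ and the obstruction to extending from the boundary lies in $H^3(W,Y)$, each $\frkt$ lifts to some $\frks \in \spinc(W)$.

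Next, remove an open 4-ball to obtain $W' = W \setminus \operatorname{int}(B^4)$, viewed as an oriented $\spinc$ cobordism from $S^3$ to $Y$ carrying the restriction of $\frks$. A direct computation gives $\chi(W') = b_2(W)$ and $\sigma(W') = -b_2(W)$, and in particular $b_2^+(W') = 0$. By the \os\ theorem that a cobordism with $b_2^+ = 0$ induces an isomorphism on $HF^\infty$, the cobordism map
\[ F^\infty_{W',\frks} \co \HFinfinity(S^3) \longto \HFinfinity(Y,\frkt) \]
is an isomorphism shifting degree by
\[ \Delta \; := \; \frac{c_1(\frks)^2 - 2\chi(W') - 3\sigma(W')}{4} \; = \; \frac{c_1(\frks)^2 + b_2(W)}{4}. \]

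To extract the inequality I would compare with $\HFminus$. Functoriality of the long exact sequence relating $\HFminus$, $\HFinfinity$, and $\HFplus$ yields a commutative square with horizontal arrows $F^-_{W',\frks}$ and $F^\infty_{W',\frks}$ and vertical arrows the canonical maps $\pi\co\HFminus\to\HFinfinity$. The top element of $\HFminus(S^3)$ sits in grading $-2$ and maps nontrivially under $\pi_{S^3}$, so $F^\infty_{W',\frks}$ sends its image to a nonzero class in $\HFinfinity(Y,\frkt)$ at grading $-2+\Delta$; by commutativity, this class lies in the image of $\pi_Y$. Since the image of $\pi_Y$ has top grading equal to $d(Y,\frkt) - 2$ (this is essentially the definition of the $d$-invariant, cut out by the kernel of $\HFinfinity(Y,\frkt) \to \HFplus(Y,\frkt)$), we conclude $-2 + \Delta \leq d(Y,\frkt) - 2$, which rearranges to the desired $c_1(\frks)^2 + b_2(W) \leq 4\,d(Y,\frkt)$.

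The principal obstacle is the appeal to the \os\ theorem that cobordisms with $b_2^+ = 0$ induce $HF^\infty$-isomorphisms; all the Floer-theoretic weight of the statement is absorbed into that input. Everything else is bookkeeping with the degree-shift formula and the structure of $HF^\infty$ for a rational homology sphere with torsion $\spinc$ structure.
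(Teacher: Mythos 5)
Your argument is correct, but note that the paper offers no proof of this statement at all: it is quoted, with a \qed, directly from Ozsv\'ath--Szab\'o \cite[Theorem 9.6]{OSz2003-grading}, so the only comparison to make is with the original source. Your route is essentially theirs: puncture $W$ to get a cobordism from $S^3$ to $Y$, invoke their result that a cobordism between rational homology spheres with $b_2^+=0$ induces an isomorphism on $\HFinfinity$, and combine the degree-shift formula $(c_1(\frks)^2-2\chi-3\sigma)/4$ with the characterization of $d(Y,\frkt)$ via the image of $\HFminus(Y,\frkt)\to\HFinfinity(Y,\frkt)$ having top grading $d(Y,\frkt)-2$. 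The only step you gloss over is that $\chi(W')=b_2(W)$ requires $b_1(W)=b_3(W)=0$, which indeed follows from $H_1(W)=0$ together with Poincar\'e--Lefschetz duality, so the bookkeeping is sound; all of the Floer-theoretic content is, as you say, carried by the $\HFinfinity$-isomorphism input.
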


\begin{proof}[Proof of Proposition \ref{prp:qa}]

Fix a value $D \in \bN$ and select a QA link $L$ with $\det(L)=D$. Set $Y = \Br(L)$, choose a 4-manifold $W$ as in Theorem \ref{thm:os1}, and let $\Lambda$ denote the lattice $(H_2(W),Q_W)$.  The values $c_1(\frks)$, $\frks \in \spinc(W)$, constitute the set of characteristic covectors
\[ \Char(\Lambda) := \{ \chi \in \text{Hom}(\Lambda,\bZ) \; | \; \langle \chi, v \rangle \equiv \langle v,v \rangle \; (\text{mod} \, 2), \, \forall v \in \Lambda \}. \]
Furthermore, the different subsets $\Char(\Lambda,\frkt) := \{ c_1(\frks) \; | \; \frks | Y = \frkt \} \subset \Char(\Lambda)$, $\frkt \in \spinc(Y)$, constitute the different equivalence classes in $\Char(\Lambda)$ $(\text{mod} \, 2 \Lambda)$, of which there are $ \text{disc}(\Lambda) =D$.  Let $m(\Lambda)$ denote the minimum value of 
\[ \max \{ (\chi^2 + \rk(\Lambda))/4 \; | \; \chi \in \Char(\Lambda,\frkt) \} \]
over the $D$ equivalence classes $ \Char(\Lambda,\frkt)$. It follows from Theorem \ref{thm:os2} that $m(\Lambda)$ provides a lower bound on $d(Y,\frkt)$, $\forall \, \frkt \in \spinc(Y)$.  Set
\[ C(D) = \inf \{ m(\Lambda') \; | \; \Lambda' \textup{ integral, definite}, \, \rk(\Lambda') < \text{disc}(\Lambda') = D \}.\]
By Theorem \ref{thm:lattice}, $C(D)$ is finite, and we obtain $C(D) \leq d(Y,\frkt), \forall \, \frkt \, \in \spinc(Y)$.  Since $L$ was arbitrary, it follows that the value $C(D)$ provides the desired constant.
\end{proof}

In the next section, we introduce the knots $K_n$ to which we will apply Proposition \ref{prp:qa}.

%%%
%%%
%%%

\section{Kanenobu's knots.}\label{s: kanenobu}

% kanenobu.tex

We begin by collecting results about the family of knots $K_{p,q}$ in Figure \ref{fig:Kanenobu}, adhering to the convention that $p=1$ (or $q=1$) denotes the half-twist $\positive$ (this convention follows \cite{Watson2006,Watson2007}). This family of knots slightly generalizes a construction of Kanenobu \cite{Kanenobu1986}, who restricted attention to even values for $p,q$. Kanenobu observed that the knots in this family are ribbon (the band sum of two discs), hence slice, and this observation remains true in the present setting. We focus on the homological invariants of these knots, denoting the Khovanov, odd-Khovanov, and knot Floer homology groups by $\Kh$, $\Kh^{\rm odd}$, and $\HFK$, respectively.

%This section collects the behaviour of various homological invariants of these knots.

\begin{figure}[ht!]
\begin{center}
\labellist 
\small
	\pinlabel $\cdots$ at 184 432
	\pinlabel $\cdots$ at 442 432
	\pinlabel $\underbrace{\phantom{aaaaaaaaaa}}_p$ at 149 376
	\pinlabel $\underbrace{\phantom{aaaaaaaaaAa}}_q$ at 390 376
\endlabellist
\raisebox{-5pt}{\includegraphics[scale=0.4]{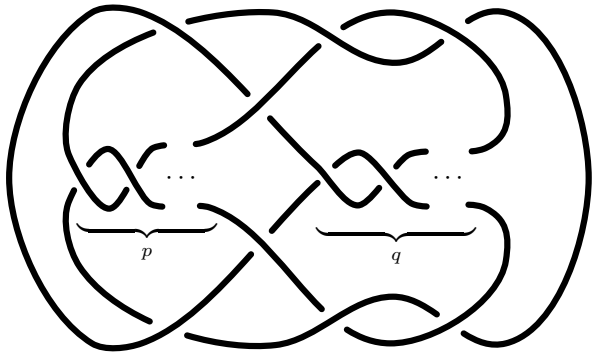}}
\caption{Kanenobu's knot $K_{p,q}$, where $p$ and $q$ denote the number of half-twists.}\label{fig:Kanenobu}
\end{center}\end{figure}

\begin{theorem}\label{thm:even} For all $p,q\in\bZ$, $\Kh(K_{p,q})\cong\Kh(K_{p+1,q-1})$.\qed\end{theorem}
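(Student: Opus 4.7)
The plan is to work in the Bar-Natan homotopy category of tangle chain complexes. First, I express $K_{p,q}$ as the closure of a tangle diagram that is naturally decomposed into two twist regions (containing $p$ and $q$ half-twists, respectively) connected by a fixed \emph{skeleton tangle} coming from the remainder of Kanenobu's diagram. This realizes the Khovanov complex of $K_{p,q}$ as a composition of three pieces: the chain complex of the $p$-twist tangle, the skeleton complex, and the chain complex of the $q$-twist tangle; compare the approach in \cite{Watson2006, Watson2007}.

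The central step is to exhibit a chain homotopy equivalence between the composition corresponding to $(p,q)$ and the composition corresponding to $(p+1, q-1)$, i.e.\ a map that transfers a single half-twist between the two twist regions through the skeleton. After delooping the unknotted circles that arise in the smoothings, I expect this to reduce to a local equivalence of small mapping cones that can be verified by Gaussian elimination in the Bar-Natan category, taking advantage of the structural symmetry of Kanenobu's skeleton.

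The main obstacle will be the chain-level bookkeeping required to verify that the candidate map is a genuine homotopy equivalence, rather than merely an isomorphism on some associated graded piece; concretely, one must track the cobordisms between resolutions on each side of the equivalence. As a fallback, one can induct on $|p|+|q|$ using the Khovanov skein long exact sequence applied to a crossing in the $p$-twist region: one resolution reduces, after a Reidemeister move, to $K_{p \mp 2, q}$, while the other produces a link whose isotopy class depends only on $p+q$. Combining the two resulting long exact sequences with the inductive hypothesis then yields the claim.
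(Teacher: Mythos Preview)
The paper does not prove this theorem in place; it cites \cite{Watson2007} and summarizes that argument as ``a computation using the skein exact sequence, together with Lee's spectral sequence.'' Concretely, the skein long exact sequence establishes $\Kh^i_j(K_{p,q}) \cong \Kh^i_j(K_{p+1,q-1})$ for all bigradings with $i \neq 0$, and Lee's spectral sequence is then used to pin down the remaining piece in homological grading $i=0$. You can see this structure mirrored in the proof of Theorem~\ref{thm:odd}, where the identical skein step is invoked but the grading-zero piece is handled instead via the Jones polynomial, precisely because odd Khovanov homology lacks a Lee-type spectral sequence.

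Your primary plan --- a direct chain homotopy equivalence in Bar-Natan's tangle category sliding a half-twist through the skeleton --- is a genuinely different and strictly stronger statement than what is needed. If it can be carried out it is cleaner, since it bypasses any auxiliary spectral sequence and works uniformly over $\bZ$. But as written it is only an outline: you correctly flag that the cobordism-level bookkeeping is the entire content, and nothing here verifies that the candidate map is a homotopy equivalence rather than, say, a quasi-isomorphism only after collapsing some filtration.

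Your fallback, on the other hand, has a real gap. Matching the outer terms of two long exact sequences does not determine the middle terms: there is an extension problem, and you have produced no map between the two sequences on which to run a five-lemma. This is exactly the difficulty that Lee's spectral sequence resolves in the cited proof, and your sketch supplies no substitute. (As a smaller point, your description of the two smoothings also looks off: in the unoriented skein triple, smoothing one crossing in the $p$-twist region gives $K_{p \mp 1, q}$ on one side, not $K_{p \mp 2, q}$, and the other smoothing --- after removing the resulting Reidemeister~I kinks --- is independent of $p$ rather than depending on $p+q$. You should recheck this against the actual Kanenobu diagram before building an induction on it.)
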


This follows from a computation using the skein exact sequence, together with Lee's spectral sequence, and is the focus of \cite{Watson2007} (in particular, see \cite[Sections 3 and 7.4]{Watson2007}). As an immediate consequence (compare \cite{Watson2006}) we obtain:

\begin{corollary}\label{crl:jones}  There is an equality of Jones polynomials $V_{K_{p,q}}(t)=V_{K_{p+1,q-1}}(t)$ for any $p,q\in\bZ$.\qed\end{corollary}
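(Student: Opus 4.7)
The plan is to deduce this corollary directly from Theorem \ref{thm:even} by invoking the categorification relationship between Khovanov homology and the Jones polynomial.

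Recall that Khovanov homology $\Kh(L)$ is a bigraded invariant whose graded Euler characteristic recovers (up to a standard normalization) the unnormalized Jones polynomial:
\[
\sum_{i,j} (-1)^i \, q^j \, \dim \Kh^{i,j}(L) \;=\; (q + q^{-1})\, V_L(q^2),
\]
so in particular two links with isomorphic bigraded Khovanov homology have equal Jones polynomials. First I would apply Theorem \ref{thm:even} to conclude that $\Kh(K_{p,q}) \cong \Kh(K_{p+1,q-1})$ as bigraded groups. Then I would take graded Euler characteristics on both sides; since isomorphic bigraded groups have the same graded dimensions in every bidegree, the resulting polynomials in $q$ coincide. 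Dividing out the common factor $(q+q^{-1})$ and substituting $q^2 = t$ yields $V_{K_{p,q}}(t) = V_{K_{p+1,q-1}}(t)$.

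There is essentially no obstacle here: the work is entirely contained in Theorem \ref{thm:even}, and the passage from Khovanov homology to the Jones polynomial is a formal consequence of Khovanov's original construction \cite{Khovanov2000}. The only point worth a moment of care is that we need the isomorphism in Theorem \ref{thm:even} to respect the bigrading (not merely the total rank), which is indeed the content of that theorem as stated and proved in \cite{Watson2007}.
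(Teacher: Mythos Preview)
Your proposal is correct and matches the paper's approach exactly: the paper presents Corollary \ref{crl:jones} as an immediate consequence of Theorem \ref{thm:even} (marked with a \qedsymbol{} and no further proof), relying on the fact that the Jones polynomial is the graded Euler characteristic of Khovanov homology. You have simply spelled out the details of that passage.
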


This corollary is useful in establishing the following: 

\begin{theorem}\label{thm:odd}For any $p,q\in\bZ$, $\Kh^{\rm odd}(K_{p,q})\cong\Kh^{\rm odd}(K_{p+1,q-1})$.\end{theorem}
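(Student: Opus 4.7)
The plan is to transcribe the proof of Theorem \ref{thm:even} from \cite{Watson2007}, substituting odd Khovanov homology for ordinary Khovanov homology throughout. Over $\bZ/2\bZ$ the two theories coincide, so the content of Theorem \ref{thm:odd} is really an integral statement. The three structural ingredients driving the Khovanov argument --- a long exact skein sequence, a Lee-type spectral sequence that constrains the total rank, and the equality of Jones polynomials --- all have direct odd counterparts. The odd skein triangle and the analog of Lee's deformation, whose $E_\infty$-page has total rank $2^{|L|}$ on an $|L|$-component link (hence rank $2$ for a knot), are both constructed in \cite{ORS2007}; the Euler-characteristic input is supplied by Corollary \ref{crl:jones}, since $\Kh^{\rm odd}$ categorifies the Jones polynomial as well.

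With these tools in hand, I would run the induction of \cite[Sections 3 and 7.4]{Watson2007}, organized by a complexity measure on $(p,q)$ such as $|p|+|q|$. Resolving a crossing inside one of the twist regions of $K_{p,q}$ produces a skein triangle in $\Kh^{\rm odd}$ whose other two vertices are identified, up to grading shift, with odd Khovanov homologies of simpler members of the family; the analogous triangle for $K_{p+1,q-1}$ produces a matched pair of vertices by the inductive hypothesis. The total-rank constraint from the Lee-type deformation and the bigraded Euler characteristic from Corollary \ref{crl:jones} then pin down the remaining vertex on each side and force the desired isomorphism, after a direct base-case computation for small $|p|+|q|$.

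The main obstacle is sign bookkeeping. Odd Khovanov homology employs a sign assignment on the cube of resolutions that differs from the one used in the ordinary theory, so one must verify that the connecting homomorphisms in the odd skein triangle, together with the identifications of resolutions with simpler links, respect the matching used in \cite{Watson2007}. Once this compatibility is confirmed --- essentially because the two cubes differ by a sign twist rather than by a structural change, and the Lee-type argument in \cite{ORS2007} is designed to be insensitive to this twist --- the argument transcribes from the Khovanov setting to the odd Khovanov setting almost verbatim.
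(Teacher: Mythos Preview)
Your plan has a genuine gap at its second structural ingredient. You assert that \cite{ORS2007} supplies a Lee-type deformation of odd Khovanov homology whose $E_\infty$-page has total rank $2^{|L|}$, but in fact \cite[Section 5]{ORS2007} records the opposite: there is \emph{no} analogue of Lee's spectral sequence in the odd theory. The candidate differentials do not assemble into a deformation with the rank collapse you need. Consequently the argument of \cite{Watson2007} cannot be transcribed verbatim; the step that, in the even theory, pins down the rank in homological grading zero via Lee's spectral sequence is unavailable here, and your proposal as written depends on it.

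The paper's workaround promotes to the lead role an ingredient you already list as auxiliary. The odd skein exact sequence (which does hold, by \cite[Proposition 1.5]{ORS2007}) already forces $\Kh^{\rm odd}(K_{p,q}) \cong \Kh^{\rm odd}(K_{p+1,q-1})$ in all homological gradings $i \ne 0$, and matches the torsion in grading zero as well; this much follows \cite[Lemma 3]{Watson2007} unchanged. Passing to $\bQ$ coefficients and to the reduced theory via the splitting $\Kh^{\rm odd} \cong \overline{\Kh}[0,+1] \oplus \overline{\Kh}[0,-1]$ of \cite[Proposition 1.7]{ORS2007}, the only undetermined quantity is the rank at the single bigrading $(i,j)=(0,0)$. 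Equality of Jones polynomials (Corollary \ref{crl:jones}) gives equality of bigraded Euler characteristics, and since every other bigrading is already matched, the ranks at $(0,0)$ are forced to agree. No Lee-type input is needed; the Euler characteristic alone closes the argument. Your worry about sign bookkeeping turns out to be a non-issue, since the odd skein triangle is established in \cite{ORS2007} at the level required and the proof uses only its existence.
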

\begin{proof}
Recall that $\Kh^{\rm odd}$ satisfies the same skein exact sequence as $\Kh$ \cite[Proposition 1.5]{ORS2007}. Thus, following the proof of Theorem \ref{thm:even}, we immediately have that  $\Kh^{\rm odd}(K_{p,q})\cong\Kh^{\rm odd}(K_{p+1,q-1})$ for all homological gradings not equal to zero  \cite[Lemma 3]{Watson2007}. Moreover, the torsion is isomorphic without this grading restriction, so we work over $\bQ$ for the remainder of the argument (see \cite[pp.1398--1399, Proof of Lemma 3]{Watson2007}). However, as there is no analogue to Lee's spectral sequence in this theory \cite[Section 5]{ORS2007}, the final step in the proof must be altered as follows. 

Since $\Kh^{\rm odd}(K_{p,q})\cong\overline{\Kh}(K_{p,q})[0,+1]\oplus \overline{\Kh}(K_{p,q})[0,-1]$  \cite[Proposition 1.7]{ORS2007}, where $[i,j]$ denotes a grading shift in the homological grading $i$ and quantum grading $j$, we proceed without loss of generality by considering $\overline{\Kh}(K_{p,q})$. Now  $V_{K_{p,q}}(t)=V_{K_{p+1,q-1}}(t)$ by Corollary \ref{crl:jones}, so we have the equality of graded Euler characteristics
\[\sum_{i,j}(-1)^i\rk\overline{\Kh}_j^i(K_{p,q})t^j = \sum_{i,j}(-1)^i\rk\overline{\Kh}_j^i(K_{p+1,q-1})t^j. \]
In combination with the observation $\overline{\Kh}_j^i(K_{p,q})\cong\overline{\Kh}_j^i(K_{p+1,q-1})$ for $(i,j)\ne(0,0)$ (again, see \cite[Proof of Lemma 3]{Watson2007}), we conclude that $\rk \overline{\Kh}_0^0(K_{p,q})= \rk \overline{\Kh}_0^0(K_{p+1,q-1})$, completing the argument. \end{proof}

The behaviour of the Alexander polynomial is slightly different in this setting since the parity of $p$ comes to bear.  There is an oriented skein triple involving $K_{p,q}$, the 2-component unlink, and either of $K_{p+2,q}$ or $K_{p,q+2}$.  From the skein relation we therefore obtain $\Delta_{K_{p,q}}(t) = \Delta_{K_{p+2,q}}(t) = \Delta_{K_{p,q+2}}(t)$. In a similar spirit, the skein exact sequence in knot Floer homology establishes the following:
 
\begin{theorem}\label{thm:hfk}
For all $p,q\in\bZ$, $\HFK(K_{p,q}) \cong \HFK(K_{p+2,q}) \cong \HFK(K_{p,q+2})$.
\end{theorem}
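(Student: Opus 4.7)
The plan is to mirror the proofs of Theorems \ref{thm:even} and \ref{thm:odd}, replacing the Khovanov skein exact sequence and the Jones polynomial identity by the oriented skein exact sequence in knot Floer homology due to \ons \cite{OSz2004-knot} together with the Alexander polynomial identity $\Delta_{K_{p,q}}(t) = \Delta_{K_{p+2,q}}(t) = \Delta_{K_{p,q+2}}(t)$ recalled immediately above the statement.

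First, I would exhibit, at a crossing in the $p$-twist region, an oriented skein triple $(K_{p+2,q}, K_{p,q}, U_2)$, where $U_2$ denotes the 2-component unlink; since $K_{p,q}$ is a knot while the oriented resolution $U_2$ is a 2-component link, this is the ``self-crossing'' case of the skein relation. Applying the knot Floer skein exact sequence produces a long exact sequence of the shape
\[
\cdots \longto \widehat{\operatorname{HFL}}(U_2) \longto \HFK(K_{p,q}) \longto \HFK(K_{p+2,q}) \longto \widehat{\operatorname{HFL}}(U_2) \longto \cdots
\]
with appropriate bigrading shifts. The link Floer homology $\widehat{\operatorname{HFL}}(U_2)$ is concentrated in a narrow, explicit range of bigradings, so the long exact sequence immediately furnishes an isomorphism $\HFK^i_j(K_{p,q}) \cong \HFK^i_j(K_{p+2,q})$ at every bigrading $(i,j)$ outside a small critical range.

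Second, to resolve the bigradings in the critical range, I would use the Alexander polynomial identity: since the graded Euler characteristic of $\HFK$ is the Alexander polynomial, for each Alexander grading $j$ we have
\[
\sum_i (-1)^i \rk \HFK^i_j(K_{p,q}) \;=\; \sum_i (-1)^i \rk \HFK^i_j(K_{p+2,q}).
\]
Combined with the equalities already established outside the critical range, this identity is intended to pin down the remaining unknown ranks, producing the isomorphism $\HFK(K_{p,q}) \cong \HFK(K_{p+2,q})$. The identical argument applied to a crossing in the $q$-twist region gives $\HFK(K_{p,q}) \cong \HFK(K_{p,q+2})$.

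The principal technical obstacle is the final rank-chasing step. In the Khovanov setting, Lee's spectral sequence (for Theorem \ref{thm:even}) or a direct rank bound using the Jones polynomial (for Theorem \ref{thm:odd}) upgrades equality of Euler characteristics into equality of ranks. In knot Floer homology the Alexander polynomial only records the Euler characteristic in the Maslov direction, so the identity above gives only one linear constraint per Alexander grading $j$. To upgrade this to a rank equality one must verify that, at each Alexander grading in the critical range, the ambiguity is confined to a single Maslov grading, or else exploit additional structural features of the skein long exact sequence (for instance, nontriviality or injectivity of specific connecting maps, or $\tau$-invariance coming from the fact that Kanenobu knots are ribbon). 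This is presumably the point where the input from Matt Hedden acknowledged in the introduction enters.
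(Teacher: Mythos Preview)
Your outline is on the right track, and you have correctly located the gap: the Alexander polynomial identity gives only an Euler-characteristic constraint, which is not enough to resolve the ranks in the critical bigrading range of the skein long exact sequence for $\HFK$. Your speculation at the end is in fact the correct fix, but the paper implements it in a specific way that you have not identified.

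The paper's proof works not with $\HFK$ but with $\operatorname{HFK}^-$, regarded as a $\bZ[U]$-module. The skein exact sequence for $\operatorname{HFK}^-$ again has the unlink term $\operatorname{HFK}^-(U_2)$, and the maps are $U$-equivariant. Because the $K_{p,q}$ are ribbon, the Ozsv\'ath--Szab\'o concordance invariant satisfies $\tau(K_{p,q}) = 0$ for all $p,q$; this pins down the position of the free $\bZ[U]$-tower in $\operatorname{HFK}^-(K_{p,q})$ and $\operatorname{HFK}^-(K_{p+2,q})$ identically. With that constraint in hand, the $U$-equivariant skein sequence forces $\operatorname{HFK}^-(K_{p,q}) \cong \operatorname{HFK}^-(K_{p+2,q})$ as $\bZ[U]$-modules, and the $\HFK$ statement follows by passing to the quotient $U=0$. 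So the missing ingredient is not a rank count via Euler characteristic but rather the $\bZ[U]$-module structure on $\operatorname{HFK}^-$ together with $\tau = 0$; this is the observation the paper attributes to Hedden.
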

\begin{proof} Since the knots $K_{p,q}$ are ribbon, this result is a special case of an observation due to Matthew Hedden. Noting that the Ozsv\'ath-Szab\'o concordance invariant $\tau(K_{p,q})$ vanishes, we have the isomorphism $\operatorname{HFK}^-(K_{p,q}) \cong \operatorname{HFK}^-(K_{p+2,q})$ as $\bZ[U]$-modules by an application of the skein exact sequence. It follows that $\HFK(K_{p,q})\cong\HFK(K_{p+2,q})$, and similarly that $\HFK(K_{p,q})\cong\HFK(K_{p,q+2})$, as claimed.
\end{proof}

We now restrict attention to the infinite family of knots \[K_n=K_{-10n,10n+3},\] $n\ge0$. We remark that $K_0$ is the knot $11^n_{50}$ -- the central example of \cite{Greene2010}.  The knots $K_n$ are distinguished by the Turaev torsion of $M_n=\Br(K_n)$, as we show in the next section.  We conclude by summarizing the relevant properties of $K_n$.

\begin{proposition}\label{prp:K_n-properties} The knots $K_{n}$ are ribbon, hyperbolic, and have identical Khovanov, odd-Khovanov, and knot Floer invariants. In particular, the knot $K_n$ is thin and $M_n$ is an L-space for all $n\ge0$. \end{proposition}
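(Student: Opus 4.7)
The ribbon property is immediate: Kanenobu's observation at the beginning of the section applies to the full generalized $(p,q)$ family, and each $K_n$ is a specialization, so the slice disks of \cite{Kanenobu1986} carry over unchanged.

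For the homological invariants, we exhibit $K_0=K_{0,3}$ as a common endpoint for both families of moves in Theorems \ref{thm:even}, \ref{thm:odd}, and \ref{thm:hfk}. For $\Kh$ and $\Kh^{\mathrm{odd}}$, the elementary move $(p,q)\mapsto(p+1,q-1)$ preserves $p+q$, so iterating it $10n$ times along the path
\[
(-10n,\,10n+3)\to(-10n+1,\,10n+2)\to\cdots\to(0,3)
\]
and applying Theorem \ref{thm:even} (respectively Theorem \ref{thm:odd}) at each step yields $\Kh(K_n)\cong\Kh(K_0)$ and $\Kh^{\mathrm{odd}}(K_n)\cong\Kh^{\mathrm{odd}}(K_0)$. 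For $\HFK$, use Theorem \ref{thm:hfk} first to walk $5n$ steps in the $p$-coordinate from $(-10n,10n+3)$ to $(0,10n+3)$, then $5n$ steps in the $q$-coordinate from $(0,10n+3)$ to $(0,3)$, giving $\HFK(K_n)\cong\HFK(K_0)$.

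Thinness and the L-space property then follow formally: the paper of \cite{Greene2010} establishes that $K_0=11^n_{50}$ is thin, so the isomorphisms above force each $K_n$ to be thin; and the Ozsv\'ath-Szab\'o result relating thinness to the collapse of the Khovanov-to-Heegaard-Floer spectral sequence \cite{OSz2005-branch} then gives that $M_n=\Br(K_n)$ is an L-space.

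The main obstacle is hyperbolicity. My plan is to view the two twist regions of $K_{p,q}$ as Dehn-fillings: introduce two unknotted linking circles $c_1,c_2$ encircling the $p$- and $q$-twist regions in Figure \ref{fig:Kanenobu}, yielding a three-component link $L=K_{0,0}\cup c_1\cup c_2$ in $S^3$, and observe that $K_{p,q}$ is recovered by performing $-\tfrac{1}{p}$- and $-\tfrac{1}{q}$-surgery on $c_1$ and $c_2$. I will verify, via a rigorous hyperbolic-structure computation (e.g.\ SnapPy together with the verified-computation routines of HIKMOT), that $S^3\setminus L$ admits a complete hyperbolic structure; Thurston's hyperbolic Dehn surgery theorem then implies that all but finitely many $(p,q)$ produce hyperbolic knot complements. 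Since the subsequence $(-10n,10n+3)$ tends to infinity in both coordinates, only finitely many $K_n$ could fail to be hyperbolic, and these exceptional cases can be ruled out individually by direct verification (after which, if necessary, one restricts to $n$ sufficiently large, which is in any case all that is required to prove Theorem~\ref{thm:main}).
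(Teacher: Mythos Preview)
Your treatment of the ribbon property, the homological invariants, thinness, and the L-space conclusion is essentially the paper's argument, only spelled out more explicitly.

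The hyperbolicity argument, however, is genuinely different from the paper's, and as written it does not prove the Proposition as stated. The paper proceeds by an elementary, computer-free route: Figure~\ref{fig:Kanenobu} exhibits a 3-bridge diagram of $K_n$; one checks that the bridge number is exactly $3$ by noting that the only 2-bridge knot with determinant $25$ and Seifert genus $2$ is $8_8$, whose Alexander polynomial differs from $\Delta_{K_n}$. Riley's trichotomy for 3-bridge knots then says $K_n$ is composite, a torus knot, or hyperbolic. Composite is excluded because $M_n$ would then be a nontrivial connect sum of two lens spaces, contradicting $H_1(M_n;\bZ)\cong\bZ/25\bZ$ (computed in Section~\ref{s: turaev}); torus is excluded by the determinant and genus. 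This handles \emph{every} $n\ge 0$ uniformly.

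Your Dehn-filling strategy is reasonable in principle, but it has two gaps relative to the stated Proposition. First, the hyperbolicity of $S^3\setminus L$ is deferred to a verified computer computation you have not actually carried out; this is a nontrivial step (note that $K_{0,0}$ itself is a symmetric union of figure-eights, so the geometry of $L$ is not obvious a priori). Second, Thurston's theorem yields no effective bound on the exceptional fillings, so ``ruling out the finitely many exceptions individually'' is not something you can do without further input --- you do not even know which $n$ are exceptional. Your fallback of restricting to $n\gg 0$ is indeed enough for Theorem~\ref{thm:main}, but the Proposition claims hyperbolicity for all $n\ge 0$, and your argument does not deliver that. The paper's 3-bridge/Riley argument is both shorter and strictly stronger here.
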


\begin{proof}%[Proof of Proposition \ref{prp:K_n-properties}]
We have already noted that the knots $K_{p,q}$ are ribbon.  Theorems \ref{thm:even}, \ref{thm:odd}, and \ref{thm:hfk} collectively establish that, for all $n > 0$, the homological invariants of $K_n$ agree with that of $K_0$, which was observed to be thin in \cite{Greene2010}.  From the spectral sequence relating $\Kh(L)$ and $\HFhat(-\Sigma(L))$, it follows that $M_n$ is an L-space for all $n \geq 0$.

It stands to show that $K_n$ is hyperbolic, which we do by adapting the argument of \cite[Lemma 5]{Kanenobu1986}.  Figure \ref{fig:Kanenobu} exhibits a 3-bridge diagram for $K_n$.  This knot has determinant 25 and (checking $\HFK$) Seifert genus 2.  The only 2-bridge knot with these invariants is $8_8$, and $\Delta_{8_8}(t) \ne \Delta_{K_n}(t)$, so $K_n$ is 3-bridge.  Now a result of Riley implies that $K_n$ is either composite, a torus knot, or hyperbolic \cite{Riley1979}. If it were composite, then it would be a connected sum of a pair of 2-bridge knots, and the branched double-cover would be a non-trivial connected sum of lens spaces. However, this possibility is ruled out by the cyclic first homology group $\bZ/25\bZ$, which we calculate in the next section. It cannot be a torus knot because of its determinant and genus.  Hence $K_n$ is hyperbolic, as claimed.\end{proof}

\section{Turaev torsion.}\label{s: turaev}

\subsection{From $d$ to $\tau$.} We begin by relating the $d$-invariant of an L-space $Y$ to a pair of well-known invariants, the Casson-Walker invariant $\lambda(Y)$ and the Turaev torsion $\tau(Y,\frkt)$.

\begin{theorem}[Rustamov {\cite[Theorem 3.4]{Rustamov1}}]\label{thm:rustamov}
For an L-space $Y$ and $\frkt \in \spinc(Y)$, we have
\[ d(Y,\frkt) = 2 \tau(Y,\frkt) - \lambda(Y).\rlap{\hspace{1.89in} \qed}\] 
\end{theorem}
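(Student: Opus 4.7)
The plan is to express both $\tau(Y,\frkt)$ and $d(Y,\frkt)$ in terms of the Heegaard Floer chain complex $\HFplus(Y,\frkt)$, and then to exploit the L-space hypothesis to collapse the resulting identity.

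First, I would invoke the identification, due to Ozsv\'ath-Szab\'o, of the Turaev torsion with a regularized Euler characteristic of $\HFplus$. For any rational homology sphere $Y$ and $\frkt \in \spinc(Y)$, there is an identity of the form
\[
\tau(Y,\frkt) \;=\; \chi\bigl(\HFred(Y,\frkt)\bigr) \;+\; f\bigl(d(Y,\frkt),\, \lambda(Y)\bigr),
\]
where $f$ is an explicit rational-valued function capturing the regularized contribution of the tower $\mathcal{T}^+$ sitting inside $\HFplus(Y,\frkt)$. I would derive this by comparing the surgery formula for $\tau$ (obtained from Fox calculus on a Heegaard diagram) with the surgery exact triangle in $\HFplus$, and then by matching the resulting surgery behavior of $\HFred$ and of $d$ against the classical surgery formulas for $\lambda$ and the Alexander polynomial.

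Second, I would invoke the L-space hypothesis. By definition, $\HFred(Y,\frkt)=0$ for every $\frkt$, so $\HFplus(Y,\frkt)\cong\mathcal{T}^+$ is a pure tower based at grading $d(Y,\frkt)$. The identity above therefore collapses to $\tau(Y,\frkt) = f(d(Y,\frkt),\lambda(Y))$, and a direct calculation of the regularized Euler characteristic of a pure tower, normalized so as to vanish on $S^3$, yields $f(d,\lambda) = \tfrac{1}{2}(d+\lambda)$. Rearranging gives the stated identity $d(Y,\frkt) = 2\tau(Y,\frkt) - \lambda(Y)$.

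The main obstacle will be fixing the regularization so that its normalization matches simultaneously Turaev's definition of $\tau$ (a Reidemeister-style torsion with its characteristic ambiguities) and the Casson-Walker invariant entering Ozsv\'ath-Szab\'o's formula for $\lambda$. In particular, the $\lambda(Y)$ correction does not arise from the local per-$\frkt$ structure of $\HFplus(Y,\frkt)$, but from a global normalization, which one fixes by testing the formula on a rational homology sphere, such as $S^3$, where all three invariants vanish.
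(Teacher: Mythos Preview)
The paper does not prove this statement at all: it is quoted verbatim as \cite[Theorem 3.4]{Rustamov1} with a terminal $\qed$ and no accompanying argument. It functions purely as a black-box input to the computation in Section~\ref{s: turaev}. So there is no ``paper's own proof'' to compare against; your proposal is an attempt to reconstruct Rustamov's argument, which lies outside the present paper.

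As a reconstruction, your outline is in the right spirit but misplaces where the work lies. The core ingredients are two prior theorems of Ozsv\'ath--Szab\'o: (i) the identification of a suitably renormalized Euler characteristic of $\HFplus(Y,\frkt)$ with the Turaev torsion $\tau(Y,\frkt)$, and (ii) the formula expressing $|H_1(Y)|\cdot\lambda(Y)$ as a sum over $\frkt$ of $\chi(\HFred(Y,\frkt))-\tfrac{1}{2}d(Y,\frkt)$. Rustamov combines these; the L-space hypothesis then kills $\HFred$ and the per-$\frkt$ identity drops out. Your proposal to \emph{derive} the first identity by ``comparing the surgery formula for $\tau$ \dots\ with the surgery exact triangle in $\HFplus$'' is not how Ozsv\'ath--Szab\'o establish it, and this step is where your sketch is least convincing: surgery triangles alone do not pin down the renormalization needed to match Turaev's conventions. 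Similarly, the appearance of $\lambda(Y)$ is not merely ``a global normalization fixed by testing on $S^3$''; it enters through the second Ozsv\'ath--Szab\'o theorem, whose proof is independent and nontrivial. If you want to supply a genuine proof rather than cite Rustamov, you should invoke those two results explicitly and then observe that the L-space condition collapses them to the displayed formula.
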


Here we normalize so that $\lambda(P) = -2$, where $P$ denotes the Poincar\'e homology sphere, oriented as the boundary of the negative definite $E_8$ plumbing.

For the case of a branched double-cover, we calculate the Casson-Walker invariant by the following formula.

\begin{theorem}[Mullins {\cite[Theorem 5.1]{Mullins1993}}]\label{thm:mullins}
For a link $L$ with $\det(L) \ne 0$, we have
\[ \lambda(\Br(L)) = -\frac{V'_L(-1)}{6V_L(-1)} + \frac{\sigma(L)}{4}. \rlap{\hspace{1.70in} \qed}\] 
\end{theorem}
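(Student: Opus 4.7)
The plan is to prove Mullins' identity by induction on the crossing number, showing that both sides of the claimed formula satisfy a common three-term skein-triad relation, with agreement on the base case of the unknot. For the base case, $V_{\text{unknot}}(t) \equiv 1$, $\sigma = 0$, and $\Br(\text{unknot}) = S^3$, so both sides vanish.

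For the inductive step, I would fix a crossing in a diagram of $L$ and consider the oriented skein triad $(L_+, L_-, L_0)$. On the right-hand side, differentiating the Jones skein relation $t^{-1}V_{L_+}(t) - t V_{L_-}(t) = (t^{1/2} - t^{-1/2}) V_{L_0}(t)$ and evaluating at $t = -1$ produces a linear relation among the six quantities $V_{L_\bullet}(-1)$ and $V'_{L_\bullet}(-1)$. Combined with the standard fact that $\sigma(L_+) - \sigma(L_-) \in \{0, 2\}$ in terms of whether the relevant Seifert surfaces are locally connected, this yields a closed recursion for the combination $-V'_L(-1)/(6V_L(-1)) + \sigma(L)/4$.

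To obtain a matching recursion for $\lambda(\Br(L))$, I would apply the Montesinos trick: in a $3$-ball neighborhood of the chosen crossing, the three local tangles for $L_+, L_-, L_0$ lift in the branched double cover to distinct Dehn fillings of a common rational-tangle exterior. Consequently, the three manifolds $\Br(L_+), \Br(L_-), \Br(L_0)$ are obtained by three slopes of pairwise unit distance on a single knot $\kappa$ inside a common $3$-manifold. Walker's surgery formula for the Casson-Walker invariant then relates the differences $\lambda(\Br(L_+)) - \lambda(\Br(L_-))$ and $\lambda(\Br(L_+)) - \lambda(\Br(L_0))$ to a logarithmic derivative of Reidemeister torsion along $\kappa$ plus explicit Dedekind-sum corrections. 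The crucial algebraic step is to identify the torsion contribution with $V'_L(-1)/V_L(-1)$, using the classical identification $|V_L(-1)| = \det(L) = |H_1(\Br(L);\bZ)|$ together with an expression of the link torsion as a Kauffman-bracket evaluation.

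The main obstacle will be the careful reconciliation of normalizations: the Dedekind-sum corrections in Walker's formula must combine with the signature jump to produce precisely the factor $\tfrac{1}{4}$ in front of $\sigma(L)$, while the torsion-derivative contribution must yield the coefficient $-\tfrac{1}{6}$ in front of $V'_L(-1)/V_L(-1)$. A secondary, more logistical obstacle is that resolving a crossing may send the determinant to zero, taking us outside the hypotheses of the theorem. One would manage this either by selecting the crossing so that all three triad members have nonzero determinant (which can typically be arranged after an initial reduction) or by treating both sides as analytic functions of a deformation parameter and specializing only after the skein recursion closes.
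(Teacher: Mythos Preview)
The paper does not prove this theorem; it is quoted verbatim from Mullins' paper \cite{Mullins1993} and marked with a \qed\ to indicate that no proof is supplied here. So there is no ``paper's own proof'' to compare against --- the authors invoke the result as a black box in order to compute $\lambda(M_n)$ via \eqref{e: d and tau}.

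That said, your outline is in spirit the route Mullins himself takes: compare a skein recursion for the right-hand side (obtained by differentiating the Jones skein relation at $t=-1$ and tracking the signature jump) against Walker's surgery formula applied to the branched double-covers of a skein triad, which by the Montesinos trick are related by Dehn surgeries at unit distance. Two points deserve more care than you give them. First, the ``crucial algebraic step'' --- matching Walker's torsion-derivative term to $V'_L(-1)/V_L(-1)$ --- is the heart of Mullins' argument and is not a formality; it requires relating the Alexander polynomial of the surgery curve in the cover to the Conway-normalized Alexander polynomial of $L$ and then to the Jones polynomial at $-1$. Second, your induction scheme is not well-founded as stated: smoothing a crossing need not lower any complexity measure, and the zero-determinant case genuinely arises (e.g.\ for split links). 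Mullins handles this by working with a larger class (links with nonzero determinant together with certain limiting cases) and verifying enough base cases --- unknot, Hopf link, unlinks --- to anchor the recursion, rather than by a straight induction on crossing number.
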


Here $V_L(t)$ denotes the Jones polynomial and $\sigma(L)$ the signature of the link $L$.  These invariants both depend for their definition on a choice of orientation of $L$ when $L$ has multiple components, although $\lambda(\Br(L))$ does not.  Note that for $L$ the positive $(3,5)$-torus knot, we obtain $V'_L(-1) = 0$, $\sigma(L) = -8$, and $\Br(L) \cong P$, which is consistent with $\lambda(P) = -2$.

We now apply these results to the space $M_n = \Br(K_n)$.  By Proposition \ref{prp:K_n-properties}, the polynomial $V_{K_n}$ is independent of $n$, and since $K_n$ is ribbon, the signature $\si(K_n)$ vanishes.  By Theorem \ref{thm:mullins}, it follows that $\lambda(M_n)$ is a constant $\lambda \in \bQ$ independent of $n$. Since $M_n$ is an L-space, Theorem \ref{thm:rustamov} applies to show that
\begin{equation}\label{e: d and tau}
d(M_n,\frkt) = 2\tau(M_n,\frkt) - \lambda, \quad \forall \, n \geq 0,\forall \, \frkt \in \spinc(M_n).
\end{equation}

Each $K_n$ has determinant $D = 25$, so Theorem \ref{thm:main} will follow on application of Proposition \ref{prp:qa} to the knots $K_n$ once we establish the following result.
\begin{proposition}\label{prp:turaev-growth}

\begin{equation}\label{e: tau unbounded}
\lim_{n \to \infty} \min \{ \tau(M_n,\frkt) \; | \; \frkt \in \spinc(M_n) \} = -\infty.
\end{equation}

\end{proposition}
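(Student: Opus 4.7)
My plan is to compute the Turaev torsion $\tau(M_n,\cdot)$ directly from an explicit, $n$-parametrized Heegaard diagram of $M_n$, as signalled in the introduction, and then to extract the desired divergence from the resulting polynomial $n$-dependence.

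First I would construct a Heegaard diagram for $M_n = \Br(K_n)$ starting from the $3$-bridge presentation of $K_n$ visible in Figure \ref{fig:Kanenobu}. A bridge sphere separates $S^3$ into two $3$-ball tangles, and the branched double cover of each such tangle is a genus-$2$ handlebody; gluing along the double cover of the bridge sphere produces a genus-$2$ Heegaard splitting of $M_n$. The $\al$- and $\be$-curves on the Heegaard surface are determined by the tangle between the two bridge spheres, and in particular the twist regions with $-10n$ and $10n+3$ half-twists contribute powers of Dehn twists whose action on the relevant curves is linear in $n$. This gives a presentation matrix for $H_1(M_n)$ with entries affine in $n$ and determinant of absolute value $25$, confirming the identification $H_1(M_n)\cong\bZ/25\bZ$ already used in the proof of Proposition \ref{prp:K_n-properties}.

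Next I would evaluate $\tau(M_n,\cdot)$ from its combinatorial definition. Up to the standard indeterminacy of an Euler structure, $\tau(M_n)$ may be written as a formal sum $\sum_\frkt \tau(M_n,\frkt)\,[\frkt]$ in the group ring $\bQ[\spinc(M_n)]$, built from signed counts of intersection points of $\al$ with $\be$ and their relative $\spinc$-labels. Because the twist contributions to the intersection data are affine in $n$, each of the $25$ coefficients $\tau(M_n,\frkt)$ is a polynomial in $n$ of bounded degree, whose coefficients I would compute explicitly from the diagram.

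Finally I would extract the divergence. On the one hand, equation \eqref{e: d and tau} together with the standard Casson-Walker formula for the sum of $d$-invariants of an L-space shows that $\sum_\frkt \tau(M_n,\frkt)$ depends only on $|H_1(M_n)|=25$ and $\lambda(M_n)$, hence is constant in $n$ by Theorem \ref{thm:mullins}, Corollary \ref{crl:jones}, and the vanishing of $\si(K_n)$. On the other hand, the explicit computation above will exhibit at least one $\frkt$ for which $\tau(M_n,\frkt)$ is non-constant in $n$. Since the $25$ polynomial expressions $\tau(M_n,\cdot)$ sum to a constant but are not all constant, their leading-in-$n$ coefficients cannot all be non-negative, so some $\tau(M_n,\frkt)\to-\infty$ as $n\to\infty$.

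The main obstacle is the concrete bookkeeping: producing the genus-$2$ Heegaard diagram with enough combinatorial precision to evaluate $\tau$ directly, fixing sign and $\spinc$-labelling conventions consistently across $n$, and isolating the $n$-dependence of each of the $25$ coefficients. The principal simplification is that all of the $n$-dependence of the diagram is localized to two specific twist bands on the Heegaard surface, so the computation effectively reduces to a controlled analysis in neighbourhoods of those bands rather than on the whole diagram.
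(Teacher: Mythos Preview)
Your overall strategy matches the paper's: compute $\tau(M_n)$ from an explicit $n$-parametrized Heegaard presentation, observe that the result is affine and non-constant in $n$, and combine this with the constraint that the coefficients sum to a constant (in fact zero, by the normalization $\varphi_0(\tau_n)=0$) to force some coefficient to $-\infty$. The execution differs in two respects. First, the paper works with the genus-$4$ Heegaard splitting arising from the white graph of the checkerboard colouring rather than the genus-$2$ splitting from the bridge presentation; this yields a four-generator presentation of $\pi_1(M_n)$ whose relators are transparently affine in $n$ (Figure~\ref{fig:relations}). Second, the paper computes $\tau$ via Fox calculus: it abelianizes the Fox matrix of this presentation, takes the determinant $\Delta_n^{44}$ of a $3\times 3$ principal minor, and then passes to $\tau_n$ by an explicit $n$-independent automorphism of $\bQ[H]$ built from the decomposition $\bQ[H]\cong\bQ\oplus\bQ(\zeta_5)\oplus\bQ(\zeta_{25})$. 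Your description of the torsion as ``signed counts of intersection points of $\al$ with $\be$'' reads more like the Euler characteristic of $\HFhat$ than a direct torsion computation; the Fox-calculus route is the standard one in Turaev's framework and is what the paper actually carries out. The paper's closing remarks do acknowledge that your genus-$2$ approach is a viable alternative, but observe that the white-graph presentation generalizes more readily to related families of symmetric unions. Finally, note that your appeal to the Casson--Walker formula to control $\sum_\frkt \tau(M_n,\frkt)$ is unnecessary: the vanishing of this sum is built into the normalization of $\tau$, which the paper invokes directly.
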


%%%
%%%
%%%

The remainder of the paper is devoted to the proof of Proposition \ref{prp:turaev-growth}, constituting the final step in the proof of Theorem \ref{thm:main}. This is accomplished by calculating the Turaev torsion of $M_n$.

Our treatment of the torsion follows Turaev's book \cite{Turaev2002}; we calculate it for a rational homology sphere $M$ through the following steps.  First, present $M$ by a Heegaard diagram.  From the diagram, write down the induced presentation of $\pi_1(M)$ and $H=H_1(M;\bZ)$.  From this presentation, write down the Fox matrix and its abelianization $A$.  Now the determinants of various minors of $A$, which lie in the group ring $\bZ[H]$, determine the torsion of $M$.  More precisely, in the case that $H$ is cyclic of prime power order, it suffices to calculate a single minor $\Delta$.  Then an explicit automorphism of $\bQ[H]$ and identification $\spinc(M) \overset{\sim}{\to} H$ transforms $\Delta$ into the torsion invariant.

\subsection{Presentations for $M_n$.}

Working from the knot projection of $K_n$ displayed in Figure \ref{fig:Kanenobu}, checkerboard colour its regions white and black so that the unbounded region gets coloured white, and construct the corresponding white graph.  Following \cite[Section 3.1]{Greene2008}, the white graph of the diagram for a link $L$ leads naturally to a Heegaard diagram for the space $\Br(L)$.  The resulting presentation of $\pi_1(\Br(L))$ is then given in concise terms from the combinatorics of the white graph.   In the case at hand, we obtain a presentation of the fundamental group with one generator $a_i$ and relator $b_i$ for each vertex $v_i$ of the white graph in a bounded region:
\[\pi_1(M_n)=\langle a_1,a_2,a_3,a_4 \; | \; b_1,b_2,b_3,b_4 \rangle. \]
To obtain the relator $b_i$, traverse a small counterclockwise loop around $v_i$.  For each edge $e$ between $v_i$ and another vertex $v_j$, record the word $(a_j^{-1} a_i)^{\mu(e)}$, where $\mu(e) = \pm 1$ denotes the sign of the crossing corresponding to $e$ (see Figure \ref{fig:conventions}). 
\begin{figure}
[ht!]
\begin{center}
\labellist 
\pinlabel $+$ at 160 380
\pinlabel $-$ at 610 380
\endlabellist
\includegraphics[scale=0.27]{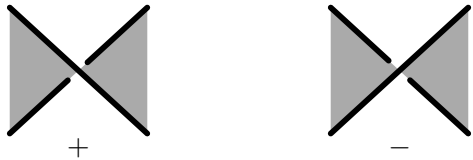}
\caption{Sign conventions at a crossing given a colouring of a knot diagram.}\label{fig:conventions}
\end{center}
\end{figure}
The product of these terms, from left to right, gives the relator $b_i$. With $p = -10n$, $q=10n+3$, we obtain the graph and relators displayed in Figure \ref{fig:relations}.

\begin{figure}
\[
\begin{array}{cc}
\labellist \small
\pinlabel {\tiny $10n$} at 119 407
\pinlabel {\tiny $10n+3$} at 489 407
	\pinlabel $\cdots$ at 119 387
	\pinlabel $\cdots$ at 497 387
	\pinlabel $+$ at 120 577
	\pinlabel $+$ at 432 610
	\pinlabel $+$ at 488 577
	\pinlabel $-$ at 307 219
	\pinlabel $-$ at 202 387
	\pinlabel $-$ at 35 387
	\pinlabel $+$ at 307 570
	\pinlabel $+$ at 407 387
	\pinlabel $+$ at 577 387	
	\pinlabel $-$ at 120 200
	\pinlabel $-$ at 432 190
	\pinlabel $-$ at 488 200
	\pinlabel $v_1$ at  178 518
	\pinlabel $v_2$ at 178 264
	\pinlabel $v_3$ at 437 264
	\pinlabel $v_4$ at 437 516
\endlabellist
\raisebox{-41pt}{\includegraphics[scale=0.27]{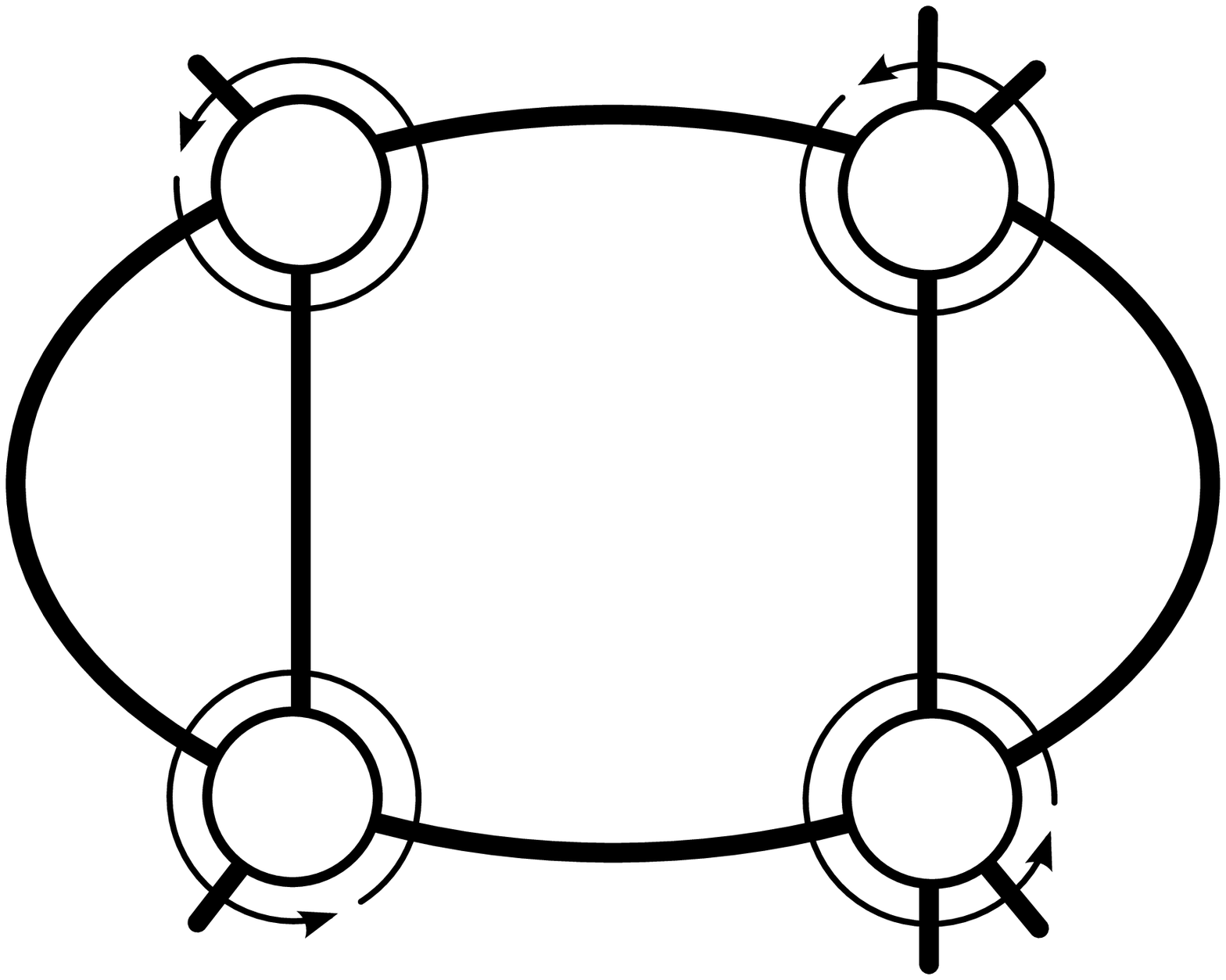}}
\qquad\qquad&
\begin{array}{l}
b_1 = (a_1^{-1}a_2)^{10n}a_4^{-1}a_1^2 \\[4pt]
b_2 =a_2^{-1} a_3(a_2^{-1}a_1)^{10n}a_2^{-1} \\[4pt]
b_3 = (a_4^{-1}a_3)^{10n+3}a^{-1}_3a_2a_3^{-2} \\[4pt]
b_4 = a_1^{-1}a_4(a_3^{-1}a_4)^{10n+3}a_4^2 \\[4pt]
\end{array}\end{array}\]\caption{The reduced white graph for the knot $K_n$ gives a recipe for the relations of the fundamental group $\pi_1(M_n)$ corresponding to a genus 4 Heegaard splitting of $M_n$.}\label{fig:relations}\end{figure}

Under the abelianization map $\ab\co\pi_1(M_n) \to H_1(M_n;\bZ)$, we obtain the presentation matrix 
\[\left( \begin{matrix}
-10n+2 & 10n & 0 & -1 \\
10n & -10n-2 & 1 & 0 \\
0 & 1 & 10n & -10n-3\\
-1 & 0  & -10n-3& 10n+6
\end{matrix} \right)\]
for $H_1(M_n;\bZ)$.  Calculating its cokernel, we find that $H_1(M_n;\bZ)$ is cyclic of order $25$, generated by any of the elements $\ab(a_i)$.  In terms of the fixed choice of generator $t := \ab(a_4)$, we calculate $\ab(a_1) = t^{13}, \ab(a_2) = t^3, \ab(a_3)=t^6$.  Thus we obtain a natural isomorphism $H_1(M_n;\bZ) \cong H := \langle t \; | \; t^{25} \rangle$.

We focus attention on an important pair of generating sets for $H$.  In the Heegaard splitting $U_\alpha \cup_\Sigma U_\beta$ specified by the white graph, let $g_i \in H$ denote the homology class of an oriented curve supported in the handlebody $U_\alpha$ that meets the disc $D_i$ bounded by $\alpha_i$ once positively and avoids all the other $D_j$, and define $h_i \in H$ similarly with respect to $U_\beta$.  In the case at hand, it is straightforward to locate an oriented curve $\gamma_i \subset \Sigma$ that meets both $\alpha_i$ and $\beta_i$ once positively and is disjoint from the other $\alpha_j, \beta_j$.  Thus, $\ab(a_i) = [\gamma_i] = g_i = h_i \in H$.

\subsection{The Fox matrix and its minors.}  With the notation $\partial_i=\frac{\partial}{\partial a_i}$ for the Fox free derivatives, let $F_n=(\partial_i b_j)$ denote the Fox matrix corresponding to the preceding presentation for $\pi_1(M_n)$, with entries in the group ring $\bZ[\pi_1(M_n)]$.  We calculate its $(4,4)$-principal minor as

\[  F_n^{44} = \small \left( \begin{matrix}
\partial_1 b_1 & 
a_2^{-1} a_3 \partial_1 (a_2^{-1} a_1)^{10n} & 
0 \\

\partial_2 (a_1^{-1} a_2)^{10n} &
\partial_2 b_2 &  
 (a_4^{-1}a_3)^{10n+3}a_3^{-1} \\

0 &  
a_2^{-1} & 
\partial_3 b_3 \\
\end{matrix} \right), \]

where

\[ \partial_1 b_1 = \partial_1(a_1^{-1}a_2)^{10n} +(a_1^{-1}a_2)^{10n}a_4^{-1}(1+a_1), \]
\[ \partial_2 b_2 = -a_2^{-1}+a_2^{-1}a_3\partial_2(a_2^{-1}a_1)^{10n}-a_2^{-1}a_3(a_2^{-1}a_1)^{10n}a_2^{-1}, \]
\[ \partial_3 b_3 =  \partial_3(a_4^{-1}a_3)^{10n+3} - (a_4^{-1}a_3)^{10n+3}a_3^{-1}(1+a_2 a_3^{-1} + a_2 a_3^{-2}), \]

and

\[ \partial_j(a_i^{-1}a_j)^{k}=a_i^{-1} \cdot {1 - (a_j a_i^{-1})^k  \over 1 - a_j a_i^{-1}}, \quad \partial_i(a_i^{-1}a_j)^{k}=-\partial_j(a_i^{-1}a_j)^{k}, \quad i\ne j.\]

Extend $\ab$ to a mapping $\bZ[\pi_1(M_n)] \to \bZ[H]$ and apply it to the entries of $F_n^{44}$ to obtain the abelianized minor

\[ A_n^{44} = \small \left( \begin{matrix}
 
 (-n \sigma+1)t^{12}+t^{24} & n \sigma & 0  \\
 n \sigma t^{12} & -n \sigma- 1 -t^{22} & t^9 \\
 0 & t^{22} & (n \sigma+1)t^{24}-1+t^4-t^6
\end{matrix} \right), \] 

writing

\[ \sigma = 2 (1+t^5 + t^{10} + t^{15} + t^{20}).\]

After a little manipulation, we calculate its determinant as

\begin{equation}\label{e: minor}
\Delta_n^{44} = n \sigma (1 + t + t^3) -1 + t^2 - t^3 - t^8 + t^9 - t^{11} + t^{12} - t^{13} + t^{15} - t^{16} - t^{20}+ t^{21} - t^{23} + t^{24}.
\end{equation}

\subsection{From the minor to the torsion.}

For a $\bZ/2\bZ$-homology sphere $M$, we have an identification 
\[ \text{Spin}^c(M) \overset{c_1}{\longrightarrow} H^2(M;\bZ) \overset{\text{PD}}{\longrightarrow} H_1(M;\bZ)\]
via the first Chern class and Poincar\'e duality.  In this way, we regard the Turaev torsion $\tau(M)$ as an element of the group ring $\bQ[H]$.  We abbreviate $\tau_n := \tau(M_n) \in \bQ[H]$, using our fixed identification $H_1(M_n;\bZ) \overset{\sim}{\to} H \cong \bZ / 25 \bZ$.

Following \cite[p.8, I.3.1]{Turaev2002}, we decompose the group ring via the map
\[ \varphi: = (\varphi_0,\varphi_1,\varphi_2) : \bQ[H] \overset{\sim}{\to} \bQ \oplus \bQ(\zeta_5) \oplus \bQ(\zeta_{25}),\]
where $\zeta_k$ denotes a primitive $k^{th}$ root of unity, and $\varphi_j$ is defined by the condition that it maps $t \in H$ to $\zeta_{5^j}$, $j=0,1,2$.  The normalization condition on the torsion implies that $\varphi_0(\tau_n) = 0$.  Additionally, note that the element $\alpha := {1 \over |H|} \sum_{h \in H} h \in \bQ[H]$ maps to $(1,0,0)$.

Given indices $j,r,s$ such that $\varphi_j(g_r), \varphi_j(h_s) \ne 1$, we have
\[ \varphi_j(\tau_n) = \epsilon_{jrs} \cdot (\varphi_j(g_r) - 1)^{-1} (\varphi_j(h_s) - 1)^{-1} \cdot \varphi_j(\Delta^{rs}_n),\]
where $\epsilon_{jrs} \ne 0$ does not depend on $n$ \cite[p.15, item (4)]{Turaev2002}.
In the case at hand, we take $r=s=4$, so $g_4 = h_4 = t$, and obtain
\[ \varphi_j(\tau_n) = \epsilon_{jrs} \cdot (\zeta_{5^j} - 1)^{-2} \cdot \varphi_j(\Delta^{44}_n), \,  j=1,2.\]

Let $\psi$ denote the automorphism of $\bQ \oplus \bQ(\zeta_5) \oplus \bQ(\zeta_{25})$ that acts as the identity on the factor $\bQ$ and as multiplication by $\epsilon_{jrs} \cdot (\zeta_{5^j} - 1)^{-2}$ on the factor $\bQ(\zeta_{5^j})$, $j=1,2$.  Then $\varphi^{-1} \circ \psi \circ \varphi$ is an automorphism of $\bQ[H]$, independent of $n$, that carries $\Delta^{44}_n - \varphi_0(\Delta^{44}_n) \cdot \alpha$ to $\tau_n$.

\subsection{Conclusion of the argument.}  The proofs of Proposition \ref{prp:turaev-growth} and Theorem \ref{thm:main} follow directly from the foregoing material.

\begin{proof}[Proof of Proposition \ref{prp:turaev-growth}.]

By \eqref{e: minor}, $\Delta^{44}_n - \varphi_0(\Delta^{44}_n) \cdot \alpha \in \bQ[H]$ varies linearly in $n$ and is non-constant.  Applying the automorphism $\varphi^{-1} \circ \psi \circ \varphi$ to it, it follows that the same holds for $\tau_n$ as well.  Due to the normalization $\varphi_0(\tau_n) = 0$, \eqref{e: tau unbounded} follows at once.
\end{proof}

\begin{proof}[Proof of Theorem \ref{thm:main}]

An infinite family is given by the knots $K_n$ for $n \gg 0$.  By Proposition \ref{prp:K_n-properties}, these knots are thin, hyperbolic, and have identical homological invariants.  It remains to argue that they are non-QA.  Combining \eqref{e: d and tau} and \eqref{e: tau unbounded}, we have 
\begin{equation*}
\lim_{n \to \infty} \min \{ d(M_n,\frkt) \; | \; \frkt \in \spinc(M_n) \} = -\infty.
\end{equation*}
Since the knots $K_n$ have fixed determinant $D = 25$, Proposition \ref{prp:qa} implies that $K_n$ is non-QA for $n \gg 0$.
\end{proof}

\subsection{Closing remarks.}

We briefly remark on the use of $10 = 2 \times 5$ in the definition of $K_n$.  The factor of 2 ensures that $\HFK(K_n)\cong\HFK(K_{n+1})$ for all $n\ge0$, while the factor of 5 ensures that $H_1(M_n;\bZ)\cong\bZ/25\bZ$ and yields a linear expression for the minor $\Delta^{44}_n$ in terms of $n$.  Any of the other nine other families of knots $K_{-10n-j,10n+j+3}$, $j=1,\dots,9$, $n \gg 0$, should suffice to establish Theorem \ref{thm:main}, with minor changes.

%with minor modifications to the calculation.

%Having completed this calculation, a remark is in order about our choice of $10=5\cdot 2$ in the definition of $K_n$: 5 ensures that $H_1(M_n;\bZ)\cong\bZ/25\bZ$ and $2$ is required so that $\HFK(K_n)\cong\HFK(K_{n+1})$ for all $n\ge0$. Each of these choices is cosmetic, though the analogous result without these would lead to a (perhaps unenlightening) case study.

Since the knot $K_n$ is 3-bridge, the manifold $M_n$ admits a Heegaard decomposition of genus two. This splitting gives an alternative presentation from which the torsion invariant may be calculated.  On the other hand, the chosen genus four Heegaard splitting (and associated presentation for the fundamental group) generalizes in a straightforward manner to the symmetric union of any pair of twist knots (note that $K_{p,q}$ is the symmetric union of figure eight knots). These symmetric unions give a natural extension of the class of Kanenobu knots, and, in particular, the various homological invariants within a given family are identical (this is established in \cite{Watson2007}). Thus, a version of Proposition \ref{prp:K_n-properties} applies to these knots, yielding further infinite families to which the techniques of this paper should apply to produce examples of hyperbolic, thin, non-QA knots with identical homological invariants.

Finally, we recall and promote \cite[Conjecture 3.1]{Greene2010}, which asserts that there exist finitely many QA links of a given determinant.  If true, then it would immediately imply our main results, Theorem \ref{thm:main} and Proposition \ref{prp:qa}.

\bibliographystyle{plain}
\bibliography{bibliography}

\begin{thebibliography}{10}

\bibitem{Greene2008}
Joshua Greene.
\newblock {A spanning tree model for the Heegaard Floer homology of a branched
  double-cover}.
\newblock {Preprint, arXiv:0805.1381}.

\bibitem{Greene2010}
Joshua Greene.
\newblock Homologically thin, non-quasi-alternating links.
\newblock {\em Math. Res. Lett.}, 17(1):39--49, 2010.

\bibitem{Kanenobu1986}
Taizo Kanenobu.
\newblock Infinitely many knots with the same polynomial invariant.
\newblock {\em Proc. Amer. Math. Soc.}, 97(1):158--162, 1986.

\bibitem{Khovanov2000}
Mikhail Khovanov.
\newblock A categorification of the {J}ones polynomial.
\newblock {\em Duke Math. J.}, 101(3):359--426, 2000.

\bibitem{MO2008}
Ciprian Manolescu and Peter Ozsv{\'a}th.
\newblock On the {K}hovanov and knot {F}loer homologies of quasi-alternating
  links.
\newblock In {\em Proceedings of {G}\"okova {G}eometry-{T}opology {C}onference
  2007}, pages 60--81. G\"okova Geometry/Topology Conference (GGT), G\"okova,
  2008.

\bibitem{MilnorHusemoller1973}
John Milnor and Dale Husemoller.
\newblock {\em Symmetric bilinear forms}.
\newblock Springer-Verlag, New York, 1973.
\newblock Ergebnisse der Mathematik und ihrer Grenzgebiete, Band 73.

\bibitem{Mullins1993}
David Mullins.
\newblock The generalized {C}asson invariant for {$2$}-fold branched covers of
  {$S^3$} and the {J}ones polynomial.
\newblock {\em Topology}, 32(2):419--438, 1993.

\bibitem{ORS2007}
Peter Ozsv{\'a}th, Jacob Rasmussen, and Zolt{\'a}n Szab{\'o}.
\newblock {Odd Khovanov homology}.
\newblock Preprint, arXiv:0710.4300.

\bibitem{OSz2003-grading}
Peter Ozsv{\'a}th and Zolt\'an Szab{\'o}.
\newblock Absolutely graded {F}loer homologies and intersection forms for
  four-manifolds with boundary.
\newblock {\em Adv. Math.}, 173(2):179--261, 2003.

\bibitem{OSz2004-knot}
Peter Ozsv{\'a}th and Zolt{\'a}n Szab{\'o}.
\newblock Holomorphic disks and knot invariants.
\newblock {\em Adv. Math.}, 186(1):58--116, 2004.

\bibitem{OSz2005-branch}
Peter Ozsv{\'a}th and Zolt{\'a}n Szab{\'o}.
\newblock On the {H}eegaard {F}loer homology of branched double-covers.
\newblock {\em Adv. Math.}, 194(1):1--33, 2005.

\bibitem{Rasmussen2003}
Jacob Rasmussen.
\newblock {\em {Floer homology and knot complements}}.
\newblock PhD thesis, Harvard University, 2003.

\bibitem{Riley1979}
Robert Riley.
\newblock An elliptical path from parabolic representations to hyperbolic
  structures.
\newblock In {\em Topology of low-dimensional manifolds ({P}roc. {S}econd
  {S}ussex {C}onf., {C}helwood {G}ate, 1977)}, volume 722 of {\em Lecture Notes
  in Math.}, pages 99--133. Springer, Berlin, 1979.

\bibitem{Rustamov1}
Raif Rustamov.
\newblock {Surgery formula for the renormalized Euler characteristic of
  Heegaard Floer homology}.
\newblock Preprint, arXiv:math.GT/0409294.

\bibitem{Turaev2002}
Vladimir Turaev.
\newblock {\em Torsions of {$3$}-dimensional manifolds}, volume 208 of {\em
  Progress in Mathematics}.
\newblock Birkh\"auser Verlag, Basel, 2002.

\bibitem{Watson2006}
Liam Watson.
\newblock Any tangle extends to non-mutant knots with the same {J}ones
  polynomial.
\newblock {\em J. Knot Theory Ramifications}, 15(9):1153--1162, 2006.

\bibitem{Watson2007}
Liam Watson.
\newblock Knots with identical {K}hovanov homology.
\newblock {\em Algebr. Geom. Topol.}, 7:1389--1407, 2007.

\end{thebibliography}

\end{document}